\newtheorem {Lemma} {Lemma}
\newtheorem {Theorem}  {Theorem}
\newenvironment {proof} {{\bf Proof.}}{\hspace*{\fill}$\Box$\par\vspace{4mm}}
\def\r{\rightarrow}
\def\p{\prime}
\def\:{\! :\!}
\def\lf{\lfloor}
\def\rf{\rfloor}
\newcommand{\calf}{\mathcal{F}}
\newcommand{\dlim}{\displaystyle\lim}
\newcommand{\calt}{\mathcal{T}}
\newcommand{\cals}{\mathcal{S}}
\newcommand{\calu}{\mathcal{U}}
\begin{document}
\baselineskip = 15pt
\bibliographystyle{plain}

\title{\bf Three Layer $Q_2$-Free Families in the Boolean Lattice}

\author{Jacob Manske\footnote{Corresponding author.}\\ Texas State University\\ San Marcos, TX, 78666\\ \texttt{jmanske@txstate.edu} \and  Jian Shen\\ Texas State University\\ San Marcos, TX 78666\\ \texttt{js48@txstate.edu} }

\maketitle

\begin{abstract}
 We prove that the largest $Q_2$-free family of subsets of $[n]$ which contains sets of at most three different sizes has at most $\left (3 + 2\sqrt {3} \right )N/3 + o(N)  \approx 2.1547N + o(N)$
members, where $N = { n \choose {\lf n/2 \rf }}$. This improves an earlier
bound of $2.207N + o(N)$ by Axenovich, Manske, and Martin.
\end{abstract}

\pagestyle{plain}

\section{Introduction and Motivation}

Let $Q_n$ be the $n$-dimensional Boolean lattice corresponding to subsets of an $n$-element set ordered by inclusion.
A poset $P = (X, \leq)$ is a subposet of $Q = (Y,\leq')$  if there is an injective map $f: X \rightarrow Y$ such that for $x_{1},x_{2} \in X$, $x_{1} \leq x_{2}$ implies $f(x_{1})\leq' f(x_{2})$.
For a poset $P$, we say that a set of elements $\calf \subseteq 2^{[n]}$ is $P$-free if   $(\calf, \subseteq)$  does not contain $P$ as a subposet.
Let $ex(n, P)$ be the size of the largest $P$-free family of subsets of $[n]$.
We say that  the set of all $i$-element subsets of $[n]$, $\binom{[n]}{i}$,  is the $i$th layer of $Q_n$.  Finally, let $N(n) = N = \binom{n}{\lfloor n/2 \rfloor }$;  i.e.,  $N$ is  the size of the largest layer of the Boolean lattice.

The first result in this area is Sperner's Theorem \cite{sperner28}, which states that $ex(n, Q_1) = N$. He also showed that the largest $Q_{1}$-free family is the largest layer in the Boolean lattice.

Many largest $P$-free families are simply unions of the largest layers in $Q_n$. For instance, the largest $Q_{1}$-free family is simply the largest layer in the Boolean lattice. In \cite{erdos45-sperner}, Erd\H{o}s generalized Sperner's result, showing that the size of the family of subsets of $[n]$ which does not contain
a chain with $k$ elements, $P_k$,  is equal to the number of elements in the $k-1$ largest layers of $Q_n$.   He also showed that the largest $P_{k}$-free family is the union of the $(k-1)$ largest layers in the Boolean lattice.

De Bonis, Katona and Swanepoel show in \cite{debkatswan05} that $ex(n,\Bowtie) = \binom{n}{\lfloor n/2\rfloor} + \binom{n}{\lfloor n/2\rfloor +1}$, where $\Bowtie$ is a subposet of $Q_n$ consisting of distinct sets $a, b, c, d$ such that
$a ,b \subset c, d$. They also showed that if $n=3$ or $n \geq 5$, the only $\Bowtie$-free family which achieves this size is the union of the two largest layers in the Boolean lattice. When $n=4$, there is another construction; take all subsets of size 2 together with $\{1\}, \{2\}, \{2,3,4\},$ and $\{1,3,4\}$.

 When an exact result is not known, often the asymptotic bounds for $ex(n,P)$ are expressed in terms of $N$.  De Bonis and Katona \cite{debkat2007} and independently Thanh \cite{Thanh98} showed that
$ex(n, V_{r+1}) = N +  o(N)$, where $V_{r+1}$ is a subposet of $Q_n$ with distinct elements $f, g_i$, $i=1,\ldots, r$,
 $f \subset g_{i}$ for $i = 1, \ldots, r$.  For a poset $K_{s,t}$,  with distinct elements $f_1, \ldots, f_s \subset g_1, \ldots, g_t$,  and
 a poset $P_k(s)$, with distinct elements $f_1 \subset \cdots \subset f_k \subset g_1, g_2, \ldots, g_s$,
  Katona and Tarjan  \cite{katonatarjan83}  and later De Bonis and Katona \cite{debkat2007} proved  that $ex(n, K_{s, t}) = 2N+o(N)$ and $ex(n, P_k(s)) = kN+o(N)$.
Griggs and Katona proved  in \cite{griggskatona08} that
$ex(n, \textsf{N}) = N + o(N)$, where $\textsf{N}$ is the poset with distinct elements $a,b,c,d$, such that  $a\subset c,d$,  and $b\subset c$.
Griggs and Lu \cite{griggslu09} proved that $ex(n,P_k(s,t)) = (k-1)N+o(N)$, where $P_k(s, t)$ is a poset with distinct elements
$f_1, f_2 \ldots, f_s \subset g_2\subset g_3 \subset  \cdots \subset  g_{k-1}\subset   h_1, \ldots, h_t$, $k\geq 3$.
 They also showed that $ex(n, O_{4k}) =N+o(N)$,   $ex(n, O_{4k-2})\leq (1+ \sqrt{2}/2) N+o(N)$, where $O_i$ is a poset of height two
 which is a cycle of length $i$ as an undirected graph. More generally, they proved that if $G=(V,E)$ is a graph and $P$ is a poset with elements $V\cup E$,
 with $v<e$ if $v\in V$, $e\in E$ and $v$ incident to $e$, then
 $ex(n,P) \leq \left(1+ \sqrt{1 - 1/(\chi(G)-1)}\right)N+o(N)$, where $\chi(G)$ is the
 chromatic number of $G$.
 Bukh \cite{bukh08} proved that $ex(n,T)= kN+o(N)$, where $T$ is a poset whose Hasse diagram is a tree and $k$ is the integer which is one less than the height of $T$. For a more complete survey on the subject, see \cite{griggslilu} and \cite{griggsli} for alternate proofs of some of the results listed above.

The smallest poset for which even an asymptotic result is not known is $Q_{2}$. In \cite{axenovichmanskemartin}, Axenovich, Martin, and the first author show that $ex(n,Q_{2}) \leq 2.283261N + o(N)$ and in the special case where  if $\calf$ is a family of subsets of $[n]$ with at most 3 different sizes and which is $Q_{2}$-free, then $|\calf| \leq 2.207N$. More recently, Griggs, Li, and Lu were able to show in \cite{griggslilu} that $\dlim_{n \to \infty}\dfrac{ex(n,Q_{2})}{N}\leq 2\frac{3}{11}$ (provided this limit exists), effectively showing that $ex(n,Q_{2}) \leq 2\frac{3}{11}N + o(N)$ and thus reducing the leading coefficient in the bound from \cite{axenovichmanskemartin} by about $.0105$. Our main result focuses on the special case where $\calf$ contains sets of at most 3 sizes; we state the result below as Theorem \ref{maintheorem}.

\begin{Theorem} Let $n$ be a positive integer.
If $\calf \subset Q_{n}$ is a  $Q_{2}$-free family, $\calf= \cals\cup \calt \cup \calu$,   where $\cals$ is a collection of minimal elements of $\calf$,
$\calu$ is a collection of maximal elements of $\calf$ and $\calt = \calf\setminus (\cals\cup \calu) $ such that for any $T\in \calt$, $S\in \cals$, $U\in \calu$,  $|T| =k$, $|U|>k$, $|S|<k$,
then $|\calf| \leq  \left (3 + 2\sqrt {3} \right )N/3 + o(N) \approx 2.1547N + o(N)$.
In particular, if $\calf$ is a $Q_2$-free  subset of three layers of $Q_n$,  then  $|\calf| \leq 2.1547N+o(N)$.
\label{maintheorem}
\end{Theorem}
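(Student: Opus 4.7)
My plan is to reduce to the case where $\cals,\calt,\calu$ each lie in a single layer, derive a polynomial inequality relating the normalized densities $\sigma=|\cals|/N$, $\tau=|\calt|/N$, $\upsilon=|\calu|/N$, and then maximize $\sigma+\tau+\upsilon$ subject to that inequality. I would begin by assuming $\cals\subseteq\binom{[n]}{k_1}$, $\calt\subseteq\binom{[n]}{k_2}$, $\calu\subseteq\binom{[n]}{k_3}$ for some $k_1<k_2<k_3$. Since any layer $\binom{[n]}{k}$ with $|k-n/2|$ much larger than $\sqrt{n}$ has size $o(N)$, only the regime where all three $k_i$ lie within $O(\sqrt{n})$ of $n/2$ is relevant; in that regime each layer has size $\Theta(N)$, and the asymptotic bound in the theorem is tested against $(1-o(1))N$ per layer.

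\textbf{Key structural observation.} In three antichain layers the only way a $Q_2$-subposet can appear in $\calf$ is as one element of $\cals$ at the bottom, two incomparable elements of $\calt$ in the middle, and one element of $\calu$ at the top: two elements playing the role of the incomparable pair must share a common size, and the top/bottom of a $Q_2$ cannot lie in $\calt$ because that would require elements of $\cals$ or $\calu$ above or below $\calt$, contradicting the hypothesis that $\cals$ is the minimal layer and $\calu$ the maximal layer of $\calf$. Consequently, for every pair $(S,U)\in\cals\times\calu$ with $S\subset U$, at most one $T\in\calt$ satisfies $S\subset T\subset U$.

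\textbf{Double counting and optimization.} I would then count triples $(S,T,U)$ with $S\subset T\subset U$ in two ways. By the structural observation, this count is bounded above by the number of comparable pairs $(S,U)\in\cals\times\calu$; the latter can be estimated from above in terms of $|\cals|$ and $|\calu|$ via a Kruskal--Katona / bipartite-LYM style inequality. On the other hand, the triple count equals $\sum_{T\in\calt}d_S(T)\,d_U(T)$, where $d_S(T)=|\{S\in\cals:S\subset T\}|$ and $d_U(T)=|\{U\in\calu:T\subset U\}|$. Applying Cauchy--Schwarz or Jensen to this sum, along with LYM-type bounds on $\sum_T d_S(T)$ and $\sum_T d_U(T)$ and Stirling estimates for the binomials, should yield a polynomial inequality (asymptotically of the form $\sigma\upsilon\leq f(\tau)+o(1)$) linking the three densities. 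Maximizing $\sigma+\tau+\upsilon$ subject to this inequality and to $\sigma,\tau,\upsilon\in[0,1]$, then exploiting the symmetry $\sigma=\upsilon$ via AM--GM and a one-variable calculus exercise, produces the value $(3+2\sqrt{3})/3=1+2/\sqrt{3}$.

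\textbf{Main obstacle.} The hardest step is deriving the polynomial inequality with exactly the constants needed so that the optimization yields $(3+2\sqrt{3})/3$ rather than a weaker bound such as $5/2$. A direct application of the double count is too loose on its own, because the extremal configuration balances a substantial middle layer against carefully chosen side layers of density about $1/\sqrt{3}$, and this balance only appears when the count of comparable pairs $(S,U)$ is sharpened via Kruskal--Katona and the ``degree profile'' $(d_S(T),d_U(T))$ across $T\in\calt$ is exploited (for example by partitioning $\calt$ according to this profile and applying the averaging piecewise). This is the same qualitative route used in \cite{axenovichmanskemartin} to obtain the $2.207N$ bound, refined carefully enough to improve the leading constant all the way down to $(3+2\sqrt{3})/3\approx 2.1547$.
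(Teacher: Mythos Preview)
Your outline diverges substantially from the paper's argument, and the step you flag as the ``main obstacle'' is in fact a genuine gap rather than a technicality. The paper does \emph{not} attempt to derive an inequality of the shape $\sigma\upsilon\le f(\tau)$ from the triple count. Instead it imports from \cite{axenovichmanskemartin} the inequality
\[
|\cals|+|\calt|+|\calu|\ \le\ 2N-|\calt|+\frac1k\sum_{Y\in\calt}\bigl(\breve f(Y)+\breve g(Y)\bigr),
\]
reducing everything to bounding $\sum_{Y\in\calt}(\breve f(Y)+\breve g(Y))$, where $\breve f(Y)=d_S(Y)$ and $\breve g(Y)=d_U(Y)$. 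The structural input is also different from yours: rather than ``at most one $T$ between each comparable pair $(S,U)$'', the paper proves the \emph{sum} constraint $f(X)+\breve g(Y)\lesssim k$ for every incidence $X\subset Y$ with $X\in\cals$, $Y\in\calt$ (here $f(X)$ counts $T\in\calt$ above $X$). From this, a double count yields $|\cals|\gtrsim\sum_{Y\in\calt}\breve f(Y)/(k-\breve g(Y))$ and symmetrically for $|\calu|$. One then splits $\calt$ into $\calt_1=\{Y:\breve f(Y)+\breve g(Y)\ge k\}$ and its complement, applies an elementary inequality and Cauchy--Schwarz over $\calt_1$, and finishes by maximizing the rational function $\dfrac{x-2}{(x+2)(x+1)}$ in $x=\dfrac{|\cals|+|\calu|}{|\calt_1|}$, whose maximum at $x=2+2\sqrt3$ gives the constant $(3+2\sqrt3)/3$.

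The concrete failure in your plan is the direction of the bounds. Your double count gives an \emph{upper} bound on $\sum_{T\in\calt}d_S(T)d_U(T)$ by the number of comparable pairs $(S,U)$, but to extract a constraint of the form $\sigma\upsilon\le f(\tau)$ you would need a \emph{lower} bound on that same sum in terms of $\sigma,\tau,\upsilon$. No such lower bound follows from $\sum_T d_S(T)$ and $\sum_T d_U(T)$ alone: the supports of $d_S$ and $d_U$ over $\calt$ can be essentially disjoint, making the product sum small even when all three families are large. Cauchy--Schwarz and Jensen both go the wrong way here. The paper sidesteps this entirely by working with the \emph{sum} $\breve f(Y)+\breve g(Y)$ rather than the product, and by feeding the degree information back into lower bounds for $|\cals|$ and $|\calu|$ instead of upper bounds. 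If you want to salvage your route you would need an additional argument forcing correlation between $d_S$ and $d_U$ over $\calt$; nothing in your sketch supplies this, and the paper's own machinery suggests that the additive constraint $f(X)+\breve g(Y)\lesssim k$, not the ``one $T$ per $(S,U)$'' observation, is the lever that actually produces $(3+2\sqrt3)/3$.
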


\section{Proof of Theorem \ref{maintheorem}}

Following the argument in \cite{axenovichmanskemartin}, it suffices to prove Theorem \ref{maintheorem} in the case where $\calf$ contains sets of size $k$, $(k-1)$, and $(k+1)$.

For two functions $A(n)$ and $B(n)$, by $A(n) \lesssim B(n)$ (or $B(n) \gtrsim A(n)$) we mean
$$\mbox{either } A(n) \le B(n) \mbox{ or } \lim_{ n \r \infty} \frac {A(n)} {B(n)} = 1.$$
Suppose ${\cal F}$ is a $Q_2$-free family from three layers, $L_1$, $L_2$, $L_3$,  of the Boolean
lattice $Q_n$, where $L_1 = \binom{[n]}{k-1}$,   $L_2 = \binom{[n]}{k}$, $L_3= \binom{[n]}{k+1}$, and by Lemma 1 from \cite{axenovichmanskemartin}, we may assume $n/2 - n ^{2/3} \leq k \leq n/2 + n ^{2/3}$. Let $\cals = \calf \cap L_1$, $\calt = \calf \cap L_2$, $\calu = \calf \cap L_3. $

 For $X \in L_1$, $Y \in L_2$, and $Z \in L_3$, we define
$$\begin{array}{ll}
f(X) = |\{ T \in \calt: X \subset T\}|; & g(Z) = |\{ T \in \calt: Z \supset T\}|;\\
\breve f( Y) = |\{ S \in \cals: S \subset Y\}|; & \breve g(Y) = |\{ U \in \calu: U \supset Y\}|;
\end{array}$$
Note that
$$\sum_{X \in \cals} f(X) = \sum _{Y \in \calt} \breve f( Y)
\mbox{ and }
\sum_{Z \in \calu} g(Z) = \sum _{Y \in \calt} \breve g( Y).$$
From  \cite{axenovichmanskemartin}, we have
\begin{equation} \label{eq:no1}
{\cal |U|+|T|+|S| }\le 2N -|\calt| + \frac 1k \sum _{Y \in \calt} \left ( \breve f (Y) + \breve g (Y) \right ),
\end{equation}
where $N = \binom{n}{\lf n/2 \rf } \approx {n \choose k}$.
We start with  a few lemmas involving some counting arguments.

\begin{Lemma} \label{Lemma 2}
For any $X \in \cals$ and $Y \in \calt$ with $X \subset Y$,
$$f(X) + \breve g(Y) \le n-k+1 \lesssim k.$$
For any $Y \in \calt$ and $Z \in \calu$ with $Y \subset Z$,
$$g(Z) + \breve f(Y) \le k+1 \lesssim k.$$
\end{Lemma}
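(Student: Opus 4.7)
The plan is to prove both inequalities by fixing the two sets in question, parameterising each ``neighbour'' family by the single element that distinguishes it, and then extracting a forbidden $Q_2$ from any coincidence of such parameters.

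For the first inequality, fix $X\in\cals$ and $Y\in\calt$ with $X\subset Y$, and write $Y=X\cup\{y_0\}$. Every $T\in\calt$ with $X\subset T$ is $T_t:=X\cup\{t\}$ for a unique $t\in[n]\setminus X$, and every $U\in\calu$ with $Y\subset U$ is $U_u:=Y\cup\{u\}$ for a unique $u\in[n]\setminus Y$. Set
\[
I=\{t\in[n]\setminus Y: T_t\in\calt\}, \qquad J=\{u\in[n]\setminus Y: U_u\in\calu\}.
\]
Then $f(X)=|I|+1$ (the ``$+1$'' coming from $T_{y_0}=Y$, which lies in $\calt$) and $\breve g(Y)=|J|$. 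The key step is to observe that $I\cap J=\emptyset$: if some $t\in I\cap J$, then $T_t\subset U_t$ (since $T_t=X\cup\{t\}$ and $U_t=X\cup\{y_0,t\}$), so $X, T_t, Y, U_t$ are four distinct sets with $X\subset T_t, Y$ and $T_t, Y\subset U_t$, realising $Q_2$ inside $\calf$. Since $I$ and $J$ are disjoint subsets of the $(n-k)$-set $[n]\setminus Y$, we conclude $f(X)+\breve g(Y)=|I|+|J|+1\le n-k+1$.

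The second inequality is proved dually. Fix $Y\in\calt$ and $Z\in\calu$ with $Y\subset Z$, and write $Z=Y\cup\{z_0\}$. Parameterise $T\in\calt$ with $T\subset Z$ by $T_z:=Z\setminus\{z\}$ for $z\in Z$, and $S\in\cals$ with $S\subset Y$ by $S_s:=Y\setminus\{s\}$ for $s\in Y$. Let $J'=\{z\in Y: T_z\in\calt\}$ and $I=\{s\in Y: S_s\in\cals\}$, so $g(Z)=|J'|+1$ (via $T_{z_0}=Y$) and $\breve f(Y)=|I|$. If some $s\in I\cap J'$, a direct check gives $S_s\subset T_s$, so $S_s, Y, T_s, Z$ are four distinct sets with $S_s\subset Y, T_s$ and $Y, T_s\subset Z$, once more yielding a $Q_2$. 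Hence $I\cap J'=\emptyset$, and since $I,J'\subset Y$ we obtain $g(Z)+\breve f(Y)\le|Y|+1=k+1$.

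There is no deep obstacle here: once the correct parameterisation is in place, everything reduces to a single disjointness statement. The only mild subtlety is separating the ``trivial'' neighbour ($t=y_0$ or $z=z_0$, which reproduces $Y$ itself) from the others, since that one cannot participate in a $Q_2$ embedding but still contributes to $f(X)$ or $g(Z)$; accounting for it separately is exactly what produces the ``$+1$'' in each of the two bounds.
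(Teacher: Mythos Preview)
Your proof is correct and essentially matches the paper's argument: both hinge on the observation that for each $Y'\in\calt$ with $X\subset Y'\ne Y$, the set $Y\cup Y'\in L_3$ is forced out of $\calu$ by $Q_2$-freeness, and that the map $Y'\mapsto Y\cup Y'$ is injective. Your element-by-element parameterisation (indexing by $t\in[n]\setminus Y$) is simply a cleaner bookkeeping of the same injection, with $U_t=Y\cup T_t$ playing the role of $Y\cup Y'$; the dual argument for the second inequality is handled analogously.
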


\begin{proof} We only prove $f(X) + \breve g(Y) \le n-k+1 \lesssim k$, and the other inequality
follows similarly.
By definition, $\breve g(Y) = |\{ U \in \calu: U \supset Y\}| \le n -|Y| = n -k$.
So we may suppose without loss of generality that $f(X) \geq 2$.
For any $Y' \in \calt $ with $X \subset Y' \ne Y$, we have $|Y\cup Y'| = |Y| + |Y'| - |Y \cap Y^\p|
= |Y| + |Y'| - |X| = 2k -(k-1) = k+1$ and thus $Y\cup Y' \in L_3$.
Since ${\cal F}$ is $Q_2$-free,
we have $Y\cup Y' \in \left ( L_3 - \calu \right )$.
Further $Y\cup Y' \ne Y\cup Y''$ for any other $Y''$ with $X \subset Y'' \in
\left ( \calt - \{Y, Y' \} \right ). $
Therefore,
\begin{eqnarray*}
\breve g(Y) & \le & |\{ U \in \calu: U \supset Y \mbox{ and } U \ne Y\cup Y' \mbox{ with }
X \subset Y' \in \left ( \calt - \{Y\} \right ) \}| \vspace{.2cm} \\
& = & |\{ U \in \calu: U \supset Y\}|- |\{ Y' \in \calt : X \subset Y'
 \in \left ( \calt - \{Y\} \right ) \}|\vspace{.2cm} \\
& = & n-k - f(X) +1.
\end{eqnarray*}
\end{proof}

\begin{Lemma} \label{Lemma 3}
$$|\cals| \gtrsim \sum _{Y \in \calt}  \frac {\breve f (Y)}{k - \breve g (Y)},$$
 $$|\calu| \gtrsim \sum _{Y \in \calt}  \frac {\breve g (Y)}{k - \breve f (Y)}.$$
\end{Lemma}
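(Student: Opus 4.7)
The plan is to prove the first inequality via a weighted double-counting identity that combines the trivial count $|\cals| = \sum_{X \in \cals} 1$ with the pointwise bound from Lemma \ref{Lemma 2}; the second inequality will then follow by the completely symmetric argument after interchanging $\cals$ with $\calu$ and $\breve f$ with $\breve g$ (replacing the bound $n-k+1$ of Lemma \ref{Lemma 2} with $k+1$, but asymptotically the same).

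Concretely, first I would discard the sets of $\cals$ that contribute nothing: set $\cals' := \{X \in \cals : f(X) \ge 1\}$, so that $|\cals| \ge |\cals'|$. For every $X \in \cals'$, I rewrite the trivial identity $1 = \sum_{Y \in \calt,\, Y \supset X} 1/f(X)$, sum over $X \in \cals'$, and swap the order of summation to obtain
$$|\cals'| \;=\; \sum_{X \in \cals'} \sum_{\substack{Y \in \calt \\ Y \supset X}} \frac{1}{f(X)} \;=\; \sum_{Y \in \calt} \sum_{\substack{X \in \cals \\ X \subset Y}} \frac{1}{f(X)}.$$
Now Lemma \ref{Lemma 2} tells us that for each pair appearing in the inner sum, $f(X) \le n-k+1 - \breve g(Y) \lesssim k - \breve g(Y)$, hence $1/f(X) \gtrsim 1/(k - \breve g(Y))$. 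Plugging this in and using $|\{X \in \cals : X \subset Y\}| = \breve f(Y)$, I get
$$|\cals| \;\ge\; |\cals'| \;\gtrsim\; \sum_{Y \in \calt} \frac{\breve f(Y)}{k - \breve g(Y)},$$
which is the desired inequality.

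The main obstacle is bookkeeping for the $\lesssim/\gtrsim$ notation. The transition from $n-k+1-\breve g(Y)$ to $k - \breve g(Y)$ relies on the range $n/2 - n^{2/3} \le k \le n/2 + n^{2/3}$ carried over from Lemma 1 of \cite{axenovichmanskemartin}, which makes $n - k + 1 = k + O(n^{2/3})$, so replacing one denominator by the other introduces only an $o(1)$ multiplicative factor that is absorbed by $\gtrsim$. A similar small amount of care is needed to make sure $k - \breve g(Y) > 0$ on terms where $\breve f(Y) \ge 1$; but for such $Y$ there is some $X \in \cals$ with $X \subset Y$ and $f(X) \ge 1$, and Lemma \ref{Lemma 2} then gives $\breve g(Y) \le n - k \lesssim k$, so the denominator is safely positive for $n$ large enough. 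No other estimates are needed.
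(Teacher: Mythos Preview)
Your proof is correct and follows essentially the same approach as the paper: both double-count over pairs $(X,Y)$ with $\cals \ni X \subset Y \in \calt$ weighted by $1/f(X)$, summing one way to get $|\cals|$ and the other way (via Lemma~\ref{Lemma 2}) to get $\sum_{Y\in\calt}\breve f(Y)/(k-\breve g(Y))$. Your extra care in isolating $\cals' = \{X\in\cals : f(X)\ge 1\}$ and in justifying the $\gtrsim$ step is more explicit than the paper's presentation, but the argument is the same.
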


\begin{proof} We use double counting to only prove the first inequality, and the other inequality
follows symmetrically. First
$$\sum_{ (X, Y):\  \cals \ni X \subset Y \in \calt } \frac 1 {f(X)}
= \sum _{X \in \cals} \ \  \sum _{X \subset Y \in \calt } \frac 1 {f(X)} = \sum _{X \in \cals}1 =|\cals|.$$
Second, by Lemma~\ref{Lemma 2},
$$\sum_{ (X, Y): \ \cals \ni X \subset Y \in \calt } \frac 1 {f(X)}
=\sum _{Y \in \calt} \ \  \sum _{\cals \ni X \subset Y } \frac 1 {f(X)}
\gtrsim \sum _{Y \in \calt} \ \  \sum _{\cals \ni X \subset Y } \frac 1 {k- \breve g (Y)} =
\sum _{Y \in \calt}  \frac {\breve f (Y)}{k - \breve g (Y)}.$$
\end{proof}

\begin{Lemma} \label{Lemma 5}
For any non-negative reals $x$ and $y$ with $x < k$, $y < k$, and $x+y \ge k$,
$$\frac x {k-y} + \frac y {k-x} \ge \frac {2x+2y}{2k -x-y}.$$
\end{Lemma}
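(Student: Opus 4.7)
The plan is to clear denominators and show the resulting polynomial inequality reduces to something manifestly non-negative. Since $x,y<k$ we have $k-x, k-y>0$, and adding these gives $2k-x-y>0$, so the common denominator $(k-x)(k-y)(2k-x-y)$ is strictly positive; the inequality is therefore equivalent to a polynomial inequality obtained by cross-multiplying.

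Next I would introduce the symmetric quantities $s = x+y$ and $p = xy$. Using $x^2+y^2 = s^2-2p$, the left-hand side combines to
\[
\frac{x}{k-y}+\frac{y}{k-x} \;=\; \frac{k s - s^2 + 2p}{k^2 - ks + p},
\]
while the right-hand side is $\tfrac{2s}{2k-s}$. The inequality then becomes
\[
(k s - s^2 + 2p)(2k-s) \;\ge\; 2s(k^2 - ks + p).
\]

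The key step is to expand both sides and observe that after cancellation the difference is
\[
s^3 - k s^2 + 4kp - 4sp \;=\; (s-k)\bigl(s^2 - 4p\bigr) \;=\; (x+y-k)(x-y)^2.
\]
The hypothesis $x+y\ge k$ gives $x+y-k\ge 0$, and $(x-y)^2\ge 0$ always, so the product is non-negative and the inequality follows.

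The proof is entirely mechanical once one spots the factorization; the only obstacle is arithmetic care in the expansion. It is worth noting that equality holds precisely when $x=y$ or $x+y=k$, which is useful information if this lemma is later applied with equality as a limiting case.
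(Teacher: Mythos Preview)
Your proof is correct and follows essentially the same approach as the paper: both clear the (positive) denominators and verify that the resulting polynomial difference factors as $(x+y-k)(x-y)^2\ge 0$. The only cosmetic difference is that you route the algebra through the symmetric variables $s=x+y$ and $p=xy$, whereas the paper writes the single-line identity
\[
x(k-x)(2k-x-y) + y(k-y)(2k-x-y) - (2x+2y)(k-x)(k-y) = (x-y)^2(x+y-k)
\]
directly.
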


\begin{proof}
$$x(k-x)(2k-x-y) + y(k-y)(2k-x-y)- (2x+2y)(k-x)(k-y) = (x-y)^2(x+y-k) \ge 0.$$
\end{proof}

Define $\calt_1 := \{ Y \in \calt: \breve f (Y) + \breve g (Y) \ge k \}$
and $\calt_2 := \{ Y \in \calt: \breve f (Y) + \breve g (Y) < k \}$.

\begin{Lemma} \label{modified Lemma 7}
$$\sum _{Y \in \calt_2} \left ( \breve f (Y) + \breve g (Y) \right )
\le k |\calt_2|,$$
$$\sum _{Y \in \calt_1} \left ( \breve f (Y) + \breve g (Y) \right )
\lesssim \frac {2k |\calt_1| \left (  |\cals|  +  |\calu| \right )}
{|\cals|  +  |\calu| + 2|\calt_1| }.$$
\end{Lemma}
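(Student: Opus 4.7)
The first inequality is immediate from the definition of $\calt_2$: each summand $\breve f(Y) + \breve g(Y)$ is strictly less than $k$, so the sum is at most $k|\calt_2|$.

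For the second inequality, the plan is to combine Lemma~5 pointwise on $\calt_1$ with Lemma~3 globally, and then close with a Jensen estimate. Concretely, for each $Y \in \calt_1$ I would set $x = \breve f(Y)$ and $y = \breve g(Y)$ in Lemma~5; the hypothesis $x+y \ge k$ is exactly the defining property of $\calt_1$, and the hypothesis $x, y < k$ holds asymptotically since $\breve f(Y) \le k$ and $\breve g(Y) \le n-k$, with $n-k \lesssim k$ in the assumed range $|k - n/2| \le n^{2/3}$. Writing $a_Y := \breve f(Y) + \breve g(Y)$, Lemma~5 gives
$$\frac{\breve f(Y)}{k - \breve g(Y)} + \frac{\breve g(Y)}{k - \breve f(Y)} \ge \frac{2 a_Y}{2k - a_Y}.$$
Summing over $Y \in \calt_1$, using non-negativity to restrict the sum on the right-hand side of Lemma~3 down to $\calt_1$, and adding the two inequalities of Lemma~3 yields
$$|\cals| + |\calu| \gtrsim \sum_{Y \in \calt_1} \frac{2 a_Y}{2k - a_Y}.$$

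The remaining step is Jensen. The function $\phi(a) = 2a/(2k-a)$ satisfies $\phi''(a) = 8k/(2k-a)^3 > 0$ on $[0, 2k)$, hence is convex there. With $t := |\calt_1|$ and $A := \sum_{Y \in \calt_1} a_Y$, Jensen's inequality gives
$$\sum_{Y \in \calt_1} \phi(a_Y) \ge t\,\phi(A/t) = \frac{2A}{2k - A/t}.$$
Chaining with the previous display produces $|\cals| + |\calu| \gtrsim 2A/(2k - A/t)$, and solving this linear-in-$A$ inequality rearranges to
$$A \lesssim \frac{2 k t \left( |\cals| + |\calu| \right)}{|\cals| + |\calu| + 2t},$$
which is the claimed bound.

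The main obstacle I anticipate is the applicability of Lemma~5 at the boundary: $\breve g(Y)$ can in principle exceed $k$ (it is only bounded by $n-k$, which is at most $k + O(n^{2/3})$), so the denominator $k - \breve g(Y)$ need not be positive on the nose. This has to be handled either by discarding the $o(|\calt|)$ offending $Y$'s, whose contribution is absorbed by the $\lesssim$, or by arguing directly from Lemma~2 with the correct denominator $n-k+1 - \breve g(Y)$ and then passing to the asymptotic limit. The same convention is already used implicitly in Lemma~3, so no new technical machinery beyond careful bookkeeping of $o(N)$ error terms is required.
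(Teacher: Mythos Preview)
Your proposal is correct and follows essentially the same route as the paper: combine Lemmas~\ref{Lemma 3} and~\ref{Lemma 5} to obtain $|\cals|+|\calu| \gtrsim \sum_{Y\in\calt_1} 2a_Y/(2k-a_Y)$, then exploit the convexity of $a\mapsto 2a/(2k-a)$ to bound $A=\sum_{Y\in\calt_1} a_Y$. The only cosmetic difference is that the paper phrases the convexity step via Cauchy--Schwarz (writing $\frac{2a}{2k-a}=\frac{4k}{2k-a}-2$ and applying $\sum \frac{4k}{2k-a_Y}\cdot\sum(2k-a_Y)\ge 4k|\calt_1|^2$) rather than Jensen, but the two yield the identical inequality.
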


\begin{proof} By the definition of $\calt_2$, we have $\sum _{Y \in \calt_2} \left ( \breve f (Y) + \breve g (Y) \right ) \le \sum _{Y \in \calt_2} k = k |\calt_2|.$ We now prove the second inequality of the lemma. Recall that $\calt_1 \cup
\calt_2$ forms a disjoint union of $ \calt$. By Lemma~\ref{Lemma 3} and Lemma~\ref{Lemma 5}
(with $x=\breve f (Y)$ and $y=\breve g (Y)$),
$$ \begin{array}{ll}
|\cals|  +  |\calu| & \gtrsim
\displaystyle \sum _{Y \in \calt}  \left ( \frac {\breve f (Y)}{k - \breve g (Y)} + \frac {\breve g (Y)}{k - \breve f (Y)} \right ) \vspace{.2cm} \\
& \ge \displaystyle \sum _{Y \in \calt_1} \frac {2\breve f (Y) + 2\breve g (Y)} { 2k - \breve f (Y) - \breve g (Y)} \vspace{.2cm}\\
& = \displaystyle \sum _{Y \in \calt_1} \left ( \frac {4k} { 2k - \breve f (Y) - \breve g (Y)} -2 \right )
 \vspace{.2cm}\\
& = \displaystyle \sum _{Y \in \calt_1} \frac {4k} { 2k - \breve f (Y) - \breve g (Y)} -2|\calt_1|.
\end{array} $$
Then, by the Cauchy-Schwarz inequality,
$$\begin{array}{ll}
& \left ( |\cals|  +  |\calu| + 2|\calt_1| \right )
\left ( 2k |\calt_1|  - \sum _{Y \in \calt_1} \left (\breve f (Y) + \breve g (Y)\right ) \right ) \vspace{.2cm} \\
\gtrsim & \displaystyle \sum _{Y \in \calt_1} \frac {4k} { 2k - \breve f (Y) - \breve g (Y)}
\sum _{Y \in \calt_1} \left ( 2k - \breve f (Y) - \breve g (Y) \right ) \ge 4k |\calt_1| ^2,
\end{array} $$
which is equivalent to
$$\sum _{Y \in \calt_1} \left ( \breve f (Y) + \breve g (Y) \right )
\lesssim 2k|\calt_1|- \frac {4k |\calt_1|^2 } { |\cals|+|\calu|+2|\calt_1|}
=\frac {2k |\calt_1| \left (  |\cals|  +  |\calu| \right )}
{|\cals|  +  |\calu| + 2|\calt_1| }.$$
\end{proof}

We are now ready to prove Theorem \ref{maintheorem}.

\noindent
\begin{proof} We will show that $$|\cals|  +  |\calu| + |\calt| \lesssim
\frac {3 + 2\sqrt {3}} {3}N \approx 2.1547N.$$
By (\ref{eq:no1}) and Lemma \ref{modified Lemma 7},
\begin{eqnarray*}
{\cal |U|+|T|+|S| }
& \lesssim & 2N -|\calt| + \frac 1k \sum _{Y \in \calt} \left ( \breve f (Y) + \breve g (Y) \right ) \vspace{.2cm}\\
& =  & 2N -|\calt_1|-|\calt_2|   + \frac 1k \sum _{Y \in \calt_1} \left ( \breve f (Y) + \breve g (Y) \right ) + \frac 1k \sum _{Y \in \calt_2} \left ( \breve f (Y) + \breve g (Y) \right )  \vspace{.2cm}\\
& \lesssim & 2N -|\calt_1| + \frac {2 |\calt_1| \left (  |\cals|  +  |\calu| \right )}
{|\cals|  +  |\calu| + 2|\calt_1| }\vspace{.2cm}\\
& = &  2N + \frac {|\calt_1| \left (  |\cals|  +  |\calu| - 2|\calt_1|  \right )}
{|\cals|  +  |\calu| + 2|\calt_1| }\vspace{.2cm}\\
& \lesssim & 2N + \left ( |\calu|+|\calt_1|+|\cals|  \right )\cdot f(x),
\end{eqnarray*}

where $$f(x) = \frac {x-2}{(x+2)(x+1)}$$
with $x= \dfrac{|\cals|  +  |\calu|}{|\calt_1|} > 0$. Note that the function $f(x)$ has a unique critical point at $x= \left (2 + 2\sqrt {3} \right )$ on the interval $[0, \infty)$. The function $f(x)$ achieves its maximum value at $x= \left (2 + 2\sqrt {3} \right )$
and thus
$${\cal |U|+|T|+|S| } \lesssim 2N+ \left ( {\cal |U|+|T|+|S| } \right ) \cdot f\left (2+ 2 \sqrt 3\right )
=2N+ \left (7- 4 \sqrt 3 \right ) \left ( {\cal |U|+|T|+|S| } \right ),$$
from which we have ${\cal |U|+|T|+|S| }\lesssim  \left (3 + 2\sqrt {3} \right )N/3  \approx 2.1547N $.
\end{proof}

\section{Future work}

There are two ways of extending the argument to get a general bound on $ex(n,Q_{2})$. One is to adapt the counting argument above to work with a family of subsets of $[n]$ with more than 3 sizes. Another way is to show that if $\calf$ is a family of size $ex(n,Q_{2})$, then $\calf$ contains sets of at most 3 different sizes. If the latter is true, then Theorem 1 shows that $ex(n,Q_{2}) \leq 2.1547N$.

We may also investigate $\pi(Q_{2}) = \dlim_{n\to\infty}\dfrac{ex(n,Q_{2})}{N}$, as the authors in \cite{griggslilu} do. It is not known if $\pi(Q_{2})$ exists, although it is conjectured in \cite{griggslu09} that $\pi(P)$ exists and is an integer for any finite poset $P$. If true, then $\pi(Q_{2}) = 2$ and $ex(n,Q_{2}) = 2N + o(N)$.

\noindent
{\bf Acknowledgments.}
Shen's research was partially supported by NSF (CNS 0835834, DMS 1005206) and Texas Higher Education
 Coordinating Board (ARP 003615-0039-2007).


\bibliography{/Users/jmanske/Documents/Studium/iastate/research/myrefs}

\begin{thebibliography}{10}

\bibitem{axenovichmanskemartin}
M.~Axenovich, J.~Manske, and R.~Martin.
\newblock ${Q}_{2}$-free families of the {B}oolean lattice.
\newblock {\em accepted by Order}, 2011.

\bibitem{bukh08}
B.~Bukh.
\newblock Set families with a forbidden subposet.
\newblock {\em Electron. J. Combin.}, 16(1):Research paper 142, 11, 2009.

\bibitem{debkat2007}
A.~De~Bonis and G.~O.~H. Katona.
\newblock Largest families without an {$r$}-fork.
\newblock {\em Order}, 24(3):181--191, 2007.

\bibitem{debkatswan05}
A.~De~Bonis, G.~O.~H. Katona, and K.~J. Swanepoel.
\newblock Largest family without {$A\cup B\subseteq C\cap D$}.
\newblock {\em J. Combin. Theory Ser. A}, 111(2):331--336, 2005.

\bibitem{erdos45-sperner}
P.~Erd\H{o}s.
\newblock On a lemma of {L}ittlewood and {O}fford.
\newblock {\em Bull. Amer. Math. Soc.}, 51:898--902, 1945.

\bibitem{griggskatona08}
J.~R. Griggs and G.~O.~H. Katona.
\newblock No four subsets forming an {$N$}.
\newblock {\em J. Combin. Theory Ser. A}, 115(4):677--685, 2008.

\bibitem{griggslu09}
J.~R. Griggs and L.~Lu.
\newblock On families of subsets with a forbidden subposet.
\newblock {\em Combinatorics, Probability and Computing}, 18:731--748, 2009.

\bibitem{griggsli}
J.R. Griggs and W.-T. Li.
\newblock The partition method for poset-free families.
\newblock {\em preprint}, 2011.

\bibitem{griggslilu}
J.R. Griggs, W.-T. Li, and L.~Lu.
\newblock Diamond-free families.
\newblock {\em accepted by J. Combinatorial Theory (Ser. A)}, 2011.

\bibitem{katonatarjan83}
G.~O.~H. Katona and T.~G. Tarj{\'a}n.
\newblock Extremal problems with excluded subgraphs in the {$n$}-cube.
\newblock In {\em Graph theory (\L ag\'ow, 1981)}, volume 1018 of {\em Lecture
  Notes in Math.}, pages 84--93. Springer, Berlin, 1983.

\bibitem{sperner28}
E.~Sperner.
\newblock Ein {S}atz \"uber {U}ntermengen einer endlichen {M}enge.
\newblock {\em Math. Z.}, 27(1):544--548, 1928.

\bibitem{Thanh98}
H.~T. Thanh.
\newblock An extremal problem with excluded subposet in the {B}oolean lattice.
\newblock {\em Order}, 15(1):51--57, 1998.

\end{thebibliography}

\end{document}